\newcommand{\eps}{\varepsilon}
\newcommand{\be}{\begin{equation}}
\newcommand{\ba}{\begin{aligned}}
\newcommand{\bee}{\begin{equation*}}
\newcommand{\ee}{\end{equation}}
\newcommand{\ea}{\end{aligned}}
\newcommand{\eee}{\end{equation*}}
\newcommand{\bea}{\begin{equation} \begin{aligned} }
\newcommand{\eea}{\end{aligned}\end{equation} }
\theoremstyle{plain}
\newtheorem{theorem}{Theorem}[section]
\newtheorem{conjecture}[theorem]{Conjecture} 
\newtheorem{claim}{Claim}[section]
\theoremstyle{remark}
\theoremstyle{definition}
\newtheorem{definition}[theorem]{Definition}
\numberwithin{equation}{section}
\begin{document}
\title[On $\kappa$-solutions and canonical neighborhoods]{On $\kappa$-solutions and canonical neighborhoods in 4d Ricci flow}
\author{Robert Haslhofer}

\begin{abstract}
We introduce a classification conjecture for $\kappa$-solutions in 4d Ricci flow. Our conjectured list
 includes known examples from the literature, but also a new 1-parameter family of $\mathbb{Z}_2^2\times \mathrm{O}_3$-symmetric bubble-sheet ovals that we construct. We observe that some special cases of the conjecture follow from recent results in the literature. We also introduce a stronger variant of the classification conjecture for ancient asymptotically cylindrical 4d Ricci flows, which does not assume smoothness and nonnegative curvature operator a priori. Assuming  this stronger variant holds true, we establish a canonical neighborhood theorem for 4d Ricci flow through cylindrical singularities, which shares some elements in common with Perelman's canonical neighborhood theorem for 3d Ricci flow as well as the mean-convex neighborhood theorem for mean curvature flow through neck-singularities. Finally, we argue that quotient-necks lead to new phenomena, and sketch an example of non-uniqueness for 4d Ricci flow through singularities.
\end{abstract}
\maketitle

\section{Introduction}

Recent groundbreaking work by Bamler \cite{Bam1,Bam2,Bam3} sparks hope towards constructing a Ricci flow through singularities in dimension 4, provided one gets a grasp on singularities.\\

In this paper, we introduce a conjectural picture of singularities in 4d Ricci flow, and derive some consequences.
To begin with, let us recall the notion of $\kappa$-solutions as introduced by Perelman \cite{Per1}:

\begin{definition}[$\kappa$-solution]
A \emph{$\kappa$-solution} is an ancient solution of the Ricci flow, complete with bounded curvature on compact time intervals, that has nonnegative curvature operator, positive scalar curvature, and is $\kappa$-noncollapsed at all scales.
\end{definition}

The relevance of $\kappa$-solutions is that they arise as blowup limits near cylindrical singularities. Simple examples to keep in mind are the non-degenerate neck-pinch and hole-punch singularity. For the neck-pinch, flowing into the singularity is modelled by a round shrinking $\mathbb{R}\times S^{n-1}$ and flowing out of the singularity is modelled by the Bryant soliton, see \cite{ACK}. For the hole-punch, flowing into the singularity is modelled by a round shrinking $\mathbb{R}^2\times S^{n-2}$ and flowing out of the singularity is modelled by an $(n-1)$-dimensional Bryant soliton times a line, see \cite{Carson}.\\

A qualitative description of $\kappa$-solutions in 3d Ricci flow played a key role in Perelman's proof of the geometrization conjecture \cite{Per2}.
In fact, by the recent classification of Brendle \cite{Brendle_ancient_Ricci} and Brendle-Daskalopoulos-Sesum \cite{BDS} any $\kappa$-solution in 3d Ricci flow is, up to scaling and finite quotients, either the round shrinking $S^3$, the round shrinking $\mathbb{R}\times S^2$, the 3d Bryant soliton, or the 3d Perelman oval. In particular, the classification of $\kappa$-solutions in 3d Ricci flow was an important ingredient in the recent proof of the generalized Smale conjecture by Bamler-Kleiner \cite{BK_diffeo,BK_contractibility,BK_prime}.\\

Here, we are concerned with $\kappa$-solutions in 4d Ricci flow. Up to scaling and finite quotients the known examples can be grouped into the following shrinkers, steadies and ovals. The shrinkers are $S^4$ with the round metric, $\mathbb{C}P^2$ with the Fubini-Study metric, $S^2\times S^2$ with the standard metric, the round shrinking neck $\mathbb{R}\times S^3$, and the round shrinking bubble-sheet $\mathbb{R}^2\times S^2$. The steadies are the $\mathrm{O}_4$-symmetric 4d steady soliton constructed by Bryant, the 3d Bryant-soliton times a line, and the 1-parameter family of $\mathbb{Z}_2\times\mathrm{O}_3$-symmetric steady solitons constructed by Lai \cite{Lai}. The known examples of ovals are the $\mathbb{Z}_2\times\mathrm{O}_4$-symmetric 4d ovals constructed by Perelman \cite{Per1}, the 3d ovals times a line, and the $\mathrm{O}_2\times\mathrm{O}_3$-symmetric 4d ovals constructed by Buttsworth \cite{Butt}.\\

In this paper, we first construct a new 1-parameter family of $\kappa$-solutions in 4d Ricci flow:

\begin{theorem}[bubble-sheet ovals]\label{thm_ovals}
On $S^4$, there exists a 1-parameter family of $\mathbb{Z}_2^2\times\mathrm{O}_3$-symmetric $\kappa$-solutions, whose tangent flow at $-\infty$ is a round shrinking $\mathbb{R}^2\times S^2$.
\end{theorem}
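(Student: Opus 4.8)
The plan is to construct these $\kappa$-solutions as limits of parabolically rescaled Ricci flows emanating from long, thin, bubble-sheet-like metrics on $S^4$, in the spirit of the Angenent--Daskalopoulos--Sesum construction of ancient ovals in mean curvature flow and of the bubble-sheet ovals in mean curvature flow constructed by Du--Haslhofer. For parameters $L\ge 1$ and an eccentricity $a\in[1,2]$, I would first fix a $\mathbb{Z}_2^2\times\mathrm{O}_3$-symmetric metric $g^0_{L,a}$ on $S^4$ with positive curvature operator that, over an elliptical central region $\{y_1^2/a^2+y_2^2\le L^2\}$, coincides with the round shrinking bubble-sheet $\mathbb{R}^2\times S^2$ --- with $y=(y_1,y_2)$ the flat-factor coordinates and $\omega$ the round two-sphere-factor coordinates --- and is closed up across the boundary circle by a fixed model cap; here $\mathbb{Z}_2^2$ acts by the sign changes $y_i\mapsto -y_i$ and $\mathrm{O}_3$ acts on $\omega$. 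Such initial data exist by an explicit interpolation; for $a=1$ the metric is in addition $\mathrm{O}_2\times\mathrm{O}_3$-symmetric, and in general $a$ records the anisotropy of the bubble-sheet region.

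Next I would run the Ricci flow $g_{L,a}(t)$ with $g_{L,a}(0)=g^0_{L,a}$. The symmetry and the nonnegativity of the curvature operator are preserved, so by B\"ohm--Wilking the flow shrinks to a round point at a finite time $T_{L,a}$; the relevant behavior is the transient \emph{bubble-sheet oval phase} before rounding out. By a point-selection argument, for $L$ large one picks a time $t_{L,a}\in(0,T_{L,a})$ and a point $p_{L,a}$ on the fixed circle-axis and parabolically rescales about $(p_{L,a},t_{L,a})$ so that the rescaled flows $\tilde g_{L,a}(t)$ are defined on backward intervals $[-\tau_L,0]$ with $\tau_L\to\infty$, have curvature uniformly bounded on compact time intervals, are uniformly $\kappa$-noncollapsed by Perelman's no-local-collapsing theorem \cite{Per1}, and have rescaled diameter bounded above and below. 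By Hamilton--Cheeger--Gromov compactness (and Bamler's theory \cite{Bam1,Bam2,Bam3}), a subsequence converges to a complete ancient Ricci flow $(M_\infty,g_\infty(t))_{t\in(-\infty,0]}$: the diameter bound forces $M_\infty\cong S^4$, and the preserved curvature positivity, the strong maximum principle applied to the scalar curvature (positive on the bubble-sheet region, hence everywhere), the uniform curvature bounds, and the $\kappa$-noncollapsing show that $g_\infty$ is a $\kappa$-solution, $\mathbb{Z}_2^2\times\mathrm{O}_3$-symmetric by construction.

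It remains to identify the tangent flow at $-\infty$ and to exhibit the parameter. Since, as $L\to\infty$, the rescaled approximating flows are --- on their central regions --- uniformly close to the round shrinking $\mathbb{R}^2\times S^2$ on longer and longer backward time-intervals, the limit $g_\infty$ has tangent flow at $-\infty$ equal to the round shrinking $\mathbb{R}^2\times S^2$; this is consistent with the general fact that the tangent flow at $-\infty$ of a $\kappa$-solution is a nonflat $\kappa$-noncollapsed shrinking gradient soliton, $\mathbb{R}^2\times S^2$ being the only one among the $\mathbb{Z}_2^2\times\mathrm{O}_3$-symmetric shrinkers (the round $S^4$, $\mathbb{C}P^2$ with the Fubini--Study metric, $S^2\times S^2$, the round neck $\mathbb{R}\times S^3$, and the round bubble-sheet $\mathbb{R}^2\times S^2$) compatible with the construction. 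For the 1-parameter family, the organizing principle is that the linearization of the normalized Ricci flow at the round shrinking $\mathbb{R}^2\times S^2$, in the $\mathbb{Z}_2^2\times\mathrm{O}_3$-invariant sector, carries a two-dimensional space of neutral modes --- spanned by the quadratics $y_1^2$ and $y_2^2$ acting on the $S^2$-radius --- in addition to a single unstable (constant) mode; hence, to leading order as $t\to-\infty$, a $\kappa$-solution asymptotic to this cylinder is distinguished by its position in this two-plane modulo an overall parabolic rescaling, i.e.\ by a single ratio. Letting $a$ range over $[1,2]$, I would show this ratio depends continuously and nonconstantly on $a$, that $a=1$ recovers the $\mathrm{O}_2\times\mathrm{O}_3$-symmetric ovals of Buttsworth \cite{Butt}, that the extreme values degenerate after rescaling to a 3d Perelman oval times a line, and that distinct ratios yield solutions not related by scaling or isometry, so that the interior of the family consists of genuinely new $\kappa$-solutions.

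I expect the main obstacle to be the \emph{oval phase} estimates underpinning the second paragraph: showing, uniformly in $L$ and $a$, that the flow spends a long, geometrically controlled time in a capped-bubble-sheet regime with the stated curvature and diameter bounds before rounding out, so that the rescaled limit is genuinely ancient, compact, and nondegenerate. This is the analogue --- now with a two-dimensional flat direction --- of the a priori estimates at the heart of the Angenent--Daskalopoulos--Sesum construction, and it is where the $\kappa$-noncollapsing, the symmetry, a bubble-sheet-improvement estimate along the flat directions, and the linearized spectral picture above must be combined; the continuity and nondegeneracy of the map $a\mapsto(\text{ratio})$ from the third paragraph is a secondary difficulty. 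As an alternative, one could bypass the limiting argument and construct the solutions perturbatively near $t=-\infty$ --- running the normalized flow backward off the round $\mathbb{R}^2\times S^2$ along its $\mathbb{Z}_2^2\times\mathrm{O}_3$-invariant center-unstable manifold, parametrized by the ratio of the $y_1^2$ and $y_2^2$ mode coefficients --- at the cost of then having to prove that the resulting ancient solution closes up smoothly into a compact $\kappa$-solution on $S^4$ with nonnegative curvature operator.
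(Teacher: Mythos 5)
Your high-level strategy — take subsequential limits of parabolically rescaled Ricci flows starting from elongated $\mathbb{Z}_2^2\times\mathrm{O}_3$-symmetric metrics with two free parameters — matches the paper's (which uses ellipsoids in $\mathbb{R}^5$ as initial data and Hamilton's theorem to keep positive curvature operator). But you have flagged, and then left unresolved, precisely the step that makes the construction work: the ``point-selection argument'' that is supposed to produce rescaled flows with backward time $\to-\infty$, uniform curvature bounds, bounded diameter, and the right tangent flow at $-\infty$. The paper resolves this with one clean device: normalize the rescaling so that Perelman's reduced volume based at the extinction time equals $(v_2+v_3)/2$ at rescaled time $-1$, a value strictly between the reduced volumes of the round shrinking $\mathbb{R}^2\times S^2$ and $\mathbb{R}\times S^3$. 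This normalization does simultaneously what your proposal needs and does not supply: continuous dependence on initial data gives $T_{L,a}\to-\infty$; if the limit split off two lines it would be the round shrinking bubble-sheet with reduced volume $v_2$ at every time, contradicting the normalization, so (together with the bounded width ratio) the limit must be compact; and by Perelman's asymptotic-soliton argument plus the Munteanu--Wang classification of noncollapsed 4d shrinkers, the normalization and the symmetries force the tangent flow at $-\infty$ to be the bubble-sheet. Without such a quantitative selection criterion, ``the approximating flows are close to $\mathbb{R}^2\times S^2$ on their central regions'' does not by itself determine the tangent flow at $-\infty$ of the rescaled limit, nor does it rule out degeneration to the bubble-sheet itself.

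Your argument for the one-parameter family is also different in kind from the paper's, and more heuristic. You count neutral modes ($y_1^2$, $y_2^2$) in the linearization at the cylinder, which is the right intuition, but does not on its own construct or distinguish solutions. The paper instead defines a reciprocal width ratio map $F^L:(0,1)\to(0,1)$ from the rescaled $x_1$- and $x_2$-diameters, proves it is continuous (via Hamilton's compactness and uniqueness of Ricci flow) and surjective (via limits $a\to 0,1$ and the intermediate value theorem), and then prescribes any target ratio $\mu\in(0,1)$ along a diagonal sequence $L_i\to\infty$, $a_i$ chosen so that $F^{L_i}(a_i)=\mu_i\to\mu$; the limit then carries that ratio, giving a genuine one-parameter family. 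Finally, your claim that $a=1$ ``recovers the $\mathrm{O}_2\times\mathrm{O}_3$-symmetric ovals of Buttsworth'' and that the endpoints degenerate to a 3d oval times a line is not justified and is not asserted in the paper; proving a match with Buttsworth's family at a distinguished parameter would require a separate uniqueness statement that is currently open.
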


Loosely speaking, our 1-parameter family interpolates between $\mathbb{R}\times$3d-ovals and 3d-ovals$\times \mathbb{R}$.
Our construction is inspired by related examples for the mean curvature flow that we constructed in our recent work with Du \cite{DH_ovals}, see also \cite{HIMW}. Now, motivated by our classification of ancient noncollapsed mean curvature flows in $\mathbb{R}^4$ from joint work with B. Choi, K. Choi, Daskalopoulos, Du, Hershkovits and Sesum  \cite{CHH_wing,CHH_translator,DH_shape,DH_no_rotation,CDDHS,CHH_ancient} we conjecture that our new examples in fact complete the list:

\begin{conjecture}[$\kappa$-solutions in 4d Ricci flow]\label{conj_kappa}
Any $\kappa$-solution in 4d Ricci flow is, up to scaling and finite quotients, given by one of the following solutions.

\begin{itemize}
\item  shrinkers: $S^4$, $\mathbb{C}P^2$, $S^2\times S^2$, $\mathbb{R}\times S^3$ or $\mathbb{R}^2\times S^2$.
\item steadies: 4d Bryant soliton, the 3d Bryant soliton times a line, or belongs to the 1-parameter family of $\mathbb{Z}_2\times\mathrm{O}_3$-symmetric steady solitons constructed by Lai \cite{Lai}.
\item ovals: the $\mathbb{Z}_2\times\mathrm{O}_4$-symmetric 4d ovals from Perelman \cite{Per1}, the 3d ovals times a line, the $\mathrm{O}_2\times\mathrm{O}_3$-symmetric 4d ovals from Buttsworth \cite{Butt}, or belongs to the 1-parameter family of $\mathbb{Z}_2^2\times\mathrm{O}_3$-symmetric ovals constructed in Theorem \ref{thm_ovals}.
\end{itemize}
\end{conjecture}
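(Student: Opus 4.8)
\emph{On a possible proof.} The plan is to follow the architecture of the three-dimensional classification of Brendle \cite{Brendle_ancient_Ricci} and Brendle-Daskalopoulos-Sesum \cite{BDS}, now empowered by Bamler's compactness and partial-regularity theory \cite{Bam1,Bam2,Bam3}, and to import the symmetry-improvement and spectral machinery developed for ancient mean curvature flows in $\mathbb{R}^4$ \cite{CHH_wing,CHH_translator,DH_shape,DH_no_rotation,CDDHS,CHH_ancient}. The first step is to analyze the tangent flow at $-\infty$: by Perelman's monotonicity the parabolically rescaled flow subconverges to a nonflat gradient shrinking soliton which inherits nonnegative curvature operator and $\kappa$-noncollapsing, hence is noncollapsed (ruling out $\mathbb{R}^3\times S^1$) and has positive scalar curvature (ruling out $\mathbb{R}^4$). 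Invoking the classification of such 4d shrinkers, the asymptotic soliton is, up to scaling and finite quotient, one of $S^4$, $\mathbb{C}P^2$, $S^2\times S^2$, $\mathbb{R}\times S^3$, or $\mathbb{R}^2\times S^2$. If it is compact, equality in Perelman's monotonicity forces the $\kappa$-solution to be self-similar, hence equal to that shrinker; this settles the first three cases.

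When the tangent flow at $-\infty$ is the round shrinking neck $\mathbb{R}\times S^3$, one is in the exact analogue of the three-dimensional situation, $S^3$-necks playing the role of $S^2$-necks. I would first establish asymptotic rotational symmetry of the ends through neck-improvement estimates in the spirit of \cite{Brendle_ancient_Ricci}, promote this to exact $\mathrm{O}_4$-symmetry on the whole flow via the vector-field/maximum-principle method, and then reduce the classification of $\mathrm{O}_4$-symmetric ancient $\kappa$-solutions to a one-dimensional profile analysis, yielding the shrinking cylinder and the 4d Bryant soliton in the noncompact case and the $\mathbb{Z}_2\times\mathrm{O}_4$-symmetric Perelman ovals \cite{Per1} in the compact case. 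I expect this part to go through essentially by the known arguments.

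The bubble-sheet case, tangent flow $\mathbb{R}^2\times S^2$ at $-\infty$, is the genuinely new and hardest part. Here I would (i) set up the bubble-sheet scale and show that at large scales in the collapsed directions the solution is $\varepsilon$-close to the round $\mathbb{R}^2\times S^2$, so its geometry is captured by a profile over a two-dimensional parameter domain together with a small correction in the $S^2$-directions; (ii) run a Merle-Zaag-type spectral analysis of the linearized operator on the cylinder $\mathbb{R}^2\times S^2$ to pin down the fine asymptotics --- in the cylindrical regime the flow asymptotically splits off a line and reduces to the 3d list crossed with $\mathbb{R}$ together with the Lai steadies \cite{Lai}, while in the genuinely bubble-sheet regime the profile curves quadratically; (iii) in the bubble-sheet regime, prove $\mathrm{O}_3$-symmetry on the $S^2$-factor and the extra $\mathbb{Z}_2^2$ reflection symmetries by a symmetry-improvement argument, and classify within the $\mathbb{Z}_2^2\times\mathrm{O}_3$-symmetric class, showing that noncompact examples are rotationally symmetric (hence 3d-Bryant$\times\mathbb{R}$ or a Lai steady) and that the compact ones are exhausted by the Buttsworth ovals \cite{Butt} and the family of Theorem \ref{thm_ovals}, via a uniqueness/Lyapunov-Schmidt argument matched to the construction.

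The main obstacle is step (iii) together with the underlying fine asymptotics (ii): in the mean curvature flow setting this required a long series of papers, and transplanting it to Ricci flow calls for new a-priori estimates controlling the non-round directions of the metric along the bubble-sheet, for which Bamler's theory should be the right framework. A secondary gap is the classification of 4d gradient shrinkers needed in the first step, which at present is only available under curvature hypotheses such as nonnegative curvature operator --- precisely the setting of $\kappa$-solutions --- so that ingredient should be within reach. This is why the statement is recorded as a conjecture rather than a theorem, and why it is worthwhile to isolate a stronger, formally weaker-hypothesis variant as the working assumption for the applications that follow.
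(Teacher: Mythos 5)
The statement you are addressing is labeled a \emph{conjecture} in the paper, and the paper offers no proof of it; it only records which special cases are known (the type~I case by Li \cite{Li_KR}, and the case where the tangent flow at $-\infty$ is a round shrinking neck by Brendle--Naff \cite{BN} and Brendle--Daskalopoulos--Naff--Sesum \cite{BDNS}) and identifies the bubble-sheet case as the main open problem. So there is no argument in the paper to compare yours against. Your outline is consistent with the paper's own assessment: the reduction via the asymptotic shrinker, the compact-shrinker rigidity from equality in Perelman's monotonicity, and the neck case via Brendle-style rotational symmetry are all either known or within reach, and you correctly flag steps (ii) and (iii) of the bubble-sheet analysis as the genuine obstruction. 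One point of comparison worth noting: the paper proposes a concrete first step in the bubble-sheet case, motivated by \cite{CHH_wing}, namely to show that for any noncompact $\kappa$-solution with bubble-sheet tangent flow at $-\infty$ the Tits cone of any time-slice is either a ray or splits off a line; this dichotomy (which separates the genuinely two-dimensionally-collapsed regime from the split case) is the Ricci flow analogue of the first structural theorem in the mean curvature flow program you cite, and your sketch would benefit from making it explicit before attempting the Merle--Zaag analysis. As written, your proposal is a plausible research program, honestly presented as such, not a proof --- which is exactly the status the paper assigns to the statement.
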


We note that Conjecture \ref{conj_kappa} is true in the type I case by a recent result of Li \cite{Li_KR} (see also Lynch-RoyoAbrego \cite{LRA}). Moreover, by recent results Brendle-Naff \cite{BN} and Brendle-Daskalopoulos-Naff-Sesum \cite{BDNS} the conjecture is also true in case the tangent flow at $-\infty$ is a round shrinking neck.
Hence, the main remaining open problem is to deal with the bubble-sheet case. Motivated by \cite{CHH_wing}, a first key step in this case would be to show that for any noncompact $\kappa$-solution in 4d Ricci flow, whose tangent flow at  $-\infty$ is a round shrinking $\mathbb{R}^2\times S^2$, the Tits cone of any time-slice is either a ray or splits off a line.\\

We now turn to the related problem of finding canonical neighborhoods for 4d Ricci flow through singularities. For our purpose it is most convenient to describe Ricci flow through singularities using the notion of metric flows introduced by Bamler \cite{Bam2}. We recall that a metric flow
is given by a set $\mathcal{X}$, a time-function $\mathfrak{t}:\mathcal{X}\to \mathbb{R}$, complete separable metrics $d_t$ on the time-slices $\mathcal{X}_t=\mathfrak{t}^{-1}(t)$, and adjoint heat kernel measures $\nu_{x;s}\in \mathcal{P}(\mathcal{X}_s)$ such that the Kolmogorov consistency condition and Bamler's sharp gradient estimate for the heat flow hold. Specifically, we work with the following class of metric flows:

\begin{definition}[metric Ricci flow]
A \emph{4d metric Ricci flow} is a $4$-dimensional, noncollapsed, $(3\pi^2/2+4)$-concentrated, future-continuous metric flow (see \cite[Definition 3.30 and 4.25]{Bam2}) that satisfies the partial regularity properties from \cite{Bam3} and satisfies the Ricci flow equation with scalar curvature bounded below  on the regular part.
\end{definition}

For example, any noncollapsed limit of smooth Ricci flows, as provided by Bamler's precompactness theorem \cite{Bam2}, is a metric Ricci flow. We also remark that by our recent work with B. Choi \cite{ChoiHaslhofer} any metric Ricci flow is a weak solution in the sense of our joint work with Naber \cite{HaslhoferNaber}.\\

Now, given any 4d metric Ricci flow $\mathcal{X}$, any space-time point $x\in\mathcal{X}$, and any sequence $\lambda_i\to \infty$, we consider the sequence of flows $\mathcal{X}^{x,\lambda_i}$ that is obtained from $\mathcal{X}$ by parabolically rescaling by $\lambda_i$ around the center $x$, equipped with the parabolically rescaled adjoint heat kernel measures
$\nu^{x;\lambda_i}_s = \nu_{x;\mathfrak{t}(x)+\lambda_i^{-2}s}$. By Bamler's theory \cite{Bam2,Bam3}, the sequence of metric flow pairs $(\mathcal{X}^{x,\lambda_{i}},(\nu^{x;\lambda_{i}}_s)_{s\leq 0})$ always subsequentially converges to a limit $(\hat{\mathcal{X}},(\hat{\nu}_{\hat{x};s})_{s\leq 0})$, 
called a \emph{tangent flow at x}. Any such tangent flow is selfsimilar (either shrinking or static) and, since we are working in dimension 4, smooth except possibly for orbifold singularities. In fact, work of Munteanu-Wang \cite{MW1,MW2} suggests that tangent flows in 4d Ricci flow are either cylindrical or asymptotically conical. We are interested in finding a canonical description of the flow near any singular space-time point $x\in\mathcal{X}$. Since singularities with an asymptotically conical tangent flow are expected to be isolated, we can  focus on cylindrical singularities:

\begin{definition}[cylindrical singularity]\label{def_cyl_sing} We say that the flow has a cylindrical singularity at $x\in\mathcal{X}$ if some tangent flow at $x$ is either a round shrinking $\mathbb{R}\times S^3$ or a round shrinking $\mathbb{R}^2\times S^2$.\footnote{We remark that we do not allow for finite quotients.  As in Hamilton's paper on positive isotropic curvature \cite{Hamilton_pic}, one can exclude quotient-necks by imposing the topological assumption that there are no essential spherical space forms.}
\end{definition}

As we will see, finding a canonical neighborhood description of cylindrical singularities is closely related to the classification problem for ancient asymptotically cylindrical flows. To discuss this, recall from Bamler's work that if $\mathcal{X}$ is ancient, then similarly as above, but now with $\lambda_i\to 0$, the metric flow pairs $(\mathcal{X}^{x,\lambda_{i}},(\nu^{x;\lambda_{i}}_s)_{s\leq 0})$ always 
subsequentially converges to a limit$(\check{\mathcal{X}},(\check{\nu}_{\check{x};s})_{s\leq 0})$, called a \emph{tangent flow at $-\infty$}.

\begin{definition}[ancient asymptotically cylindrical 4d Ricci flow]
An \emph{ancient asymptotically cylindrical 4d Ricci flow} is an ancient 4d metric Ricci flow such that some tangent flow at $-\infty$ is either a round shrinking $\mathbb{R}\times S^3$ or a round shrinking $\mathbb{R}^2\times S^2$.
\end{definition}

Generalizing Conjecture \ref{conj_kappa} ($\kappa$-solutions in 4d Ricci flow) we propose:

\begin{conjecture}[ancient 4d Ricci flows]\label{conj_cyl}
Any ancient asymptotically cylindrical 4d Ricci flow is, up to scaling, either $\mathbb{R}\times S^3$ or $\mathbb{R}^2\times S^2$ or one of the steadies or ovals listed in Conjecture \ref{conj_kappa}.
\end{conjecture}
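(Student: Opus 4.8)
The natural strategy for attacking Conjecture~\ref{conj_cyl} proceeds in two stages. In the first stage, one upgrades an ancient asymptotically cylindrical 4d metric Ricci flow $\mathcal{X}$ to a smooth $\kappa$-solution. Since some tangent flow at $-\infty$ is a smooth round cylinder, Bamler's compactness and partial regularity theory \cite{Bam2,Bam3}, together with a neck-stability argument, should force $\mathcal{X}$ to be $\varepsilon$-close at controlled scales to a round cylinder on ever larger regions as $t\to-\infty$; in particular its regular part is all of $(-\infty,T)$ with $T$ the extinction time, and the flow is $\kappa$-noncollapsed with positive scalar curvature. The essential point is nonnegativity of the curvature operator: this is a pointwise condition preserved forward in time by the maximum principles of Hamilton and Wilking, and it holds asymptotically as $t\to-\infty$ since the cylinder has nonnegative curvature operator, so if it were violated at some $(p_0,t_0)$ a maximum-principle argument---run with the care required for noncompact flows and using the asymptotic cylindrical structure to control spatial infinity---should propagate the violation back to $-\infty$ without improvement, a contradiction. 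This reduces Conjecture~\ref{conj_cyl} to Conjecture~\ref{conj_kappa}, together with the identification of the tangent flow at $-\infty$ with the shrinker it approximates.

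In the second stage, one classifies $\kappa$-solutions. The type I case is settled by Li \cite{Li_KR}, and the case of a round shrinking neck tangent flow by Brendle-Naff \cite{BN} and Brendle-Daskalopoulos-Naff-Sesum \cite{BDNS}, so the remaining task is the bubble-sheet case: classify $\kappa$-solutions whose tangent flow at $-\infty$ is $\mathbb{R}^2\times S^2$. Following the blueprint of our mean curvature flow work \cite{CHH_wing,DH_shape,DH_no_rotation,CDDHS,CHH_ancient,CHH_translator}, I would proceed as follows: (i) prove the Tits-cone dichotomy, namely that the asymptotic cone of each time-slice of a noncompact such solution is either a ray or splits off a line---the Ricci-flow analog of the wing analysis of \cite{CHH_wing}---using neck improvement to produce approximate symmetry far out on the bubble-sheet end and then a Colding-Minicozzi / Bamler-entropy rigidity argument; (ii) in the line-splitting case, use a splitting theorem to write the flow as the product of a 3d ancient asymptotically cylindrical flow with a line, and apply the 3d classification of Brendle \cite{Brendle_ancient_Ricci} and Brendle-Daskalopoulos-Sesum \cite{BDS}, yielding $\mathbb{R}\times S^3$, the 3d Bryant soliton times a line, or a 3d oval times a line; (iii) in the ray case, show the solution is a steady soliton by a rigidity/monotonicity argument, establish its residual $\mathrm{O}_3$-symmetry by neck improvement, and classify the resulting $\mathbb{Z}_2\times\mathrm{O}_3$-symmetric 4d steady solitons, matching the 4d Bryant soliton and Lai's family \cite{Lai}; and (iv) in the compact case, improve the symmetry to $\mathbb{Z}_2^2\times\mathrm{O}_3$ and carry out an ODE analysis in the style of the oval analyses for mean curvature flow and of Buttsworth \cite{Butt}, pinning the solution down to Perelman's oval, Buttsworth's oval, or a member of the new family of Theorem~\ref{thm_ovals}.

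The main obstacle lies in steps (iii)--(iv): unlike the 3d picture and unlike the neck case, the bubble-sheet steadies and ovals are not rotationally symmetric---Lai's solitons and the new ovals carry only a discrete symmetry crossed with $\mathrm{O}_3$---so the rigidity and ODE arguments must be run in a genuinely lower-symmetry setting, and the resulting 1-parameter moduli matched exactly rather than excluded. A further substantial difficulty is the Tits-cone dichotomy (i), which in the hypersurface setting used the full strength of \cite{CHH_wing} and the codimension-one convexity structure that is unavailable here, so transplanting it to Ricci flow will require new input, presumably from Bamler's $\mathbb{F}$-convergence and the fine structure of 4d shrinkers of Munteanu-Wang \cite{MW1,MW2}. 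Finally, the nonnegativity-of-curvature step in the first stage is itself delicate in dimension 4, where no Hamilton-Ivey-type pinching is available a priori, so the asymptotic cylindrical hypothesis has to be exploited quantitatively.
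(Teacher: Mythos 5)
The statement you are attempting to prove is Conjecture~\ref{conj_cyl}, which the paper states as an open conjecture and does not prove; there is no proof in the paper to compare your attempt against, and your text is a research roadmap rather than a proof. As a roadmap it is sensible and largely mirrors what the paper itself indicates: the type I case is handled by Li \cite{Li_KR}, the neck case by Brendle-Naff \cite{BN} and Brendle-Daskalopoulos-Naff-Sesum \cite{BDNS}, and the paper explicitly suggests the Tits-cone dichotomy as the key first step in the bubble-sheet case, which is your step~(i). You correctly identify the two main clusters of difficulty. But each of your stages contains genuine gaps that you gesture at without closing, so this cannot be read as a proof of the conjecture.

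Two of these gaps deserve to be named more sharply than you do. First, your stage-one reduction from Conjecture~\ref{conj_cyl} to Conjecture~\ref{conj_kappa} is not a minor upgrade but precisely the reason the paper introduces a separate, stronger conjecture: one must show that an ancient 4d metric Ricci flow that is merely asymptotically cylindrical is in fact smooth with nonnegative curvature operator. Your proposed maximum-principle argument does not work as described. Hamilton's and Wilking's maximum principles preserve nonnegativity of the curvature operator \emph{forward} in time from an initial slice where it already holds; here you only know that rescaled limits as $t\to-\infty$ have nonnegative curvature operator, which does not give nonnegativity on any actual time-slice of the flow to propagate from. In dimension~$3$ one has Hamilton-Ivey pinching to convert $\kappa$-noncollapsing into curvature positivity on blowup limits; no analogue is available in dimension~$4$, and the asymptotic cylindrical hypothesis has to be used in a quantitative way that is not currently understood. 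Second, your steps (iii)-(iv) -- the classification of bubble-sheet steadies and ovals -- constitute the main open content of the conjecture and, as you note, cannot be carried out by transplanting the mean curvature flow arguments, since the codimension-one convexity structure and the full rotational symmetry are both absent. Until those two gaps are filled, the proposal remains a program, not a proof, which is consistent with the paper's presentation of the statement as a conjecture.
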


Conjecture \ref{conj_cyl} (ancient 4d Ricci flows) is motivated by our recent classification of ancient asymptotically cylindrical mean curvature flows from joint work with Choi, Hershkovits, and White \cite{CHH,CHHW}.\\

Assuming the conjecture, we establish the existence of canonical neighborhoods:

\begin{theorem}[canonical neighborhoods]\label{thm_can_nbd} Let $\mathcal{X}$ be a 4d metric Ricci flow that has a cylindrical singularity at $x\in\mathcal{X}$. Then, assuming Conjecture \ref{conj_cyl} (ancient 4d Ricci flows), for every $\eps>0$ there exists a $\delta=\delta(\eps,x)>0$, such that at any regular $y\in P(x,\delta)$ the scalar curvature satisfies $R(y)>\eps^{-1}$ and the flow that is obtained from $\mathcal{X}$ by centering at $y$ and parabolically rescaling by $R(y)^{1/2}$ is $\eps$-close in $C^{\lfloor 1/\eps\rfloor}$ in $P_{-}(0,1/\eps)$ to one of the solutions from above or to $S^4$.
\end{theorem}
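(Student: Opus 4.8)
The plan is to argue by contradiction via a point-picking and blowup argument, using Conjecture \ref{conj_cyl} to identify the blowup limit. Suppose the conclusion fails for some $\eps>0$. Then there is a sequence $\delta_i\to 0$ and regular points $y_i\in P(x,\delta_i)$ such that either $R(y_i)\leq \eps^{-1}$, or the flow obtained from $\mathcal{X}$ by centering at $y_i$ and parabolically rescaling by $R(y_i)^{1/2}$ is not $\eps$-close in $C^{\lfloor 1/\eps\rfloor}$ on $P_{-}(0,1/\eps)$ to any solution from the list or to $S^4$. The first alternative is disposed of quickly: since some tangent flow at $x$ is a round shrinking cylinder, the scalar curvature blows up along any sequence of regular points approaching $x$ in the parabolic sense (using the pseudolocality and $\eps$-regularity theory from \cite{Bam2,Bam3}, together with the fact that the cylinder has positive scalar curvature and the singular set has parabolic Minkowski dimension at most $2$), so $R(y_i)\to\infty$; in particular $R(y_i)>\eps^{-1}$ for $i$ large. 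Hence we may assume the second alternative holds for all large $i$.

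Next I would set up the blowup. Let $\mathcal{X}^i$ be the metric flow obtained from $\mathcal{X}$ by centering at $y_i$ and parabolically rescaling by $R(y_i)^{1/2}$, equipped with the correspondingly rescaled conjugate heat kernel measures based at $y_i$ (and also keeping track of the measures based at $x$). By Bamler's compactness theorem \cite{Bam2}, after passing to a subsequence, $(\mathcal{X}^i,(\nu^{y_i;\cdot}_s))$ converges in the $\mathbb{F}$-sense to a limit metric flow $\mathcal{X}^\infty$. The key points to extract are: (i) $\mathcal{X}^\infty$ is an \emph{ancient} metric Ricci flow — this uses that $\delta_i\to0$ forces the rescaling factors $R(y_i)^{1/2}\to\infty$ (again because $R\to\infty$ near $x$), so the rescaled flows are defined on time intervals $(-T_i,0]$ with $T_i\to\infty$; (ii) $\mathcal{X}^\infty$ has normalized scalar curvature, $R=1$ at the basepoint, so it is nonflat; (iii) $\mathcal{X}^\infty$ is \emph{asymptotically cylindrical} in the sense required by Conjecture \ref{conj_cyl}. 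For (iii), the essential input is that $x$ has a cylindrical tangent flow: the rescaled measures $\nu^{x;\cdot}$ have Gaussian density close to that of $\mathbb{R}\times S^3$ or $\mathbb{R}^2\times S^2$ at small scales, and by Bamler's monotonicity of the pointed Nash entropy along with a standard comparison between the basepoints $y_i$ and $x$ (which are parabolically $\delta_i$-close, hence infinitely close after rescaling by $R(y_i)^{1/2}$ once one checks $R(y_i)\delta_i^2\to\infty$ — this requires a preliminary curvature-vs-distance estimate near a cylindrical point, obtainable from the $\eps$-regularity theorem), the tangent flow at $-\infty$ of $\mathcal{X}^\infty$ inherits a cylindrical self-similar structure. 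Granting (i)–(iii), Conjecture \ref{conj_cyl} applies: $\mathcal{X}^\infty$ is, up to scaling, one of $\mathbb{R}\times S^3$, $\mathbb{R}^2\times S^2$, or one of the steadies or ovals in Conjecture \ref{conj_kappa}. Moreover, on the compact regular region $P_-(0,1/\eps)$, the $\mathbb{F}$-convergence upgrades to smooth $C^{\lfloor 1/\eps\rfloor}$-convergence by Bamler's partial regularity theory \cite{Bam3} (the limit is smooth there because it is a $\kappa$-solution or cylinder, all of which are smooth with bounded curvature on compact time intervals). But then $\mathcal{X}^i$ is eventually $\eps$-close in $C^{\lfloor 1/\eps\rfloor}$ on $P_-(0,1/\eps)$ to a solution on the list — together with $S^4$ covering the borderline compact shrinker case — contradicting the choice of $y_i$. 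This contradiction proves the theorem.

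The main obstacle is step (iii): rigorously propagating the cylindrical tangent-flow information from the fixed center $x$ to the moving centers $y_i$ and then out to $-\infty$ in the blowup limit. The difficulty is that $y_i$ may approach $x$ along a degenerate direction (e.g. tangent to the cylinder's $\mathbb{R}$- or $\mathbb{R}^2$-factor), so after rescaling the basepoint $y_i$ may ``escape to spatial infinity'' relative to $x$; one must then show the limit $\mathcal{X}^\infty$ is still a flow whose tangent flow at $-\infty$ is a cylinder rather than, say, Euclidean space or a non-cylindrical shrinker. This is where one needs the full strength of Bamler's entropy theory — in particular almost-monotonicity and almost-rigidity of the pointed Nash entropy and the $\mathbb{F}$-distance estimates between conjugate heat kernels based at nearby points — to guarantee that the Gaussian density at scale comparable to $1$ along $\mathcal{X}^\infty$ forces it into the asymptotically cylindrical class, as opposed to merely being ancient and noncollapsed. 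I expect this argument to parallel the corresponding step in the mean curvature flow setting \cite{CHHW}, with Huisken's monotonicity replaced by Bamler's Nash entropy monotonicity, but the orbifold-singularity possibility in 4d and the two distinct cylindrical models ($\mathbb{R}\times S^3$ versus $\mathbb{R}^2\times S^2$) introduce additional bookkeeping. The remaining steps — the curvature blowup near $x$, the compactness, and the smooth upgrade on the regular part — are by now standard consequences of \cite{Bam2,Bam3}.
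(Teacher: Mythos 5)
Your blowup-by-contradiction structure matches the paper's approach at a high level, but there is a genuine gap in the step you yourself flag as ``the main obstacle,'' and your proposed fix is not the correct resolution. You try to establish that the blowup limit $\mathcal{X}^\infty$ is itself an \emph{ancient asymptotically cylindrical} flow (your item (iii)), and you acknowledge the worry that after rescaling by $R(y_i)^{1/2}$ the basepoints $y_i$ may escape to spatial infinity relative to $x$. That worry is well-founded, and in fact (iii) is simply false in general: the limit need not have a cylindrical tangent flow at $-\infty$. For instance, if $y_i$ sits near the extinction of an oval cap that is developing inside a cylindrical region, the limit may be a round shrinking $S^4$, whose tangent flow at $-\infty$ is $S^4$ itself, not a cylinder. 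No amount of Nash-entropy almost-rigidity will force the limit into the asymptotically cylindrical class, because the limit genuinely lies outside that class. The paper's resolution is different and more robust: it observes (using Bamler's change-of-basepoint theorem from \cite[Section 6]{Bam2}) that $\mathcal{X}^\infty$ is either an ancient asymptotically cylindrical flow \emph{or a nontrivial blowup limit of one}, and then notes that, granting Conjecture \ref{conj_cyl}, taking nontrivial blowups of the solutions on the list only adds $S^4$ (since the ovals become extinct at round points and the steadies are eternal). Your mention of ``$S^4$ covering the borderline compact shrinker case'' is a symptom that you sense this issue, but you do not supply the mechanism — so as written the argument does not close.

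There is also a secondary gap you skip over: before you can rescale by $\lambda_i = R(y_i)^{1/2}$ and conclude that the limit is smooth with controlled geometry at the basepoint, you need to know that the regularity scale $r_{\mathrm{reg}}(y_i)$ is comparable to $R(y_i)^{-1/2}$ near $x$. Without this, after rescaling the regularity scale at the basepoint could degenerate to zero, and the $C^{\lfloor 1/\eps\rfloor}$ upgrade on $P_-(0,1/\eps)$ would fail. The paper establishes this comparability by a separate preliminary contradiction argument (rescaling by $r_{\mathrm{reg}}(y_i)^{-1}$ instead and observing that the rescaled scalar curvature at the basepoint would tend to zero, contradicting strict positivity of $R$ on all solutions in the classified list). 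This same device also gives a cleaner proof of the initial curvature blowup $R(y_i)\to\infty$ than your appeal to pseudolocality and the Minkowski dimension of the singular set. I would recommend restructuring along these lines: (1) uniqueness of the cylindrical tangent flow at $x$ via Colding--Minicozzi isolatedness and local noncollapsing via Nash-entropy semicontinuity; (2) curvature blowup via regularity-scale rescaling and $R(0)=0$ contradiction; (3) comparability of $r_{\mathrm{reg}}$ and $R^{-1/2}$ near $x$; (4) final blowup by $R(y_i)^{1/2}$, with the limit identified as either an asymptotically cylindrical flow or a blowup thereof, hence on the list or $S^4$.
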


Here, as in \cite[Definition 3.38]{Bam2}, we work with the parabolic neighborhoods $P(x,r)$ consisting of all space-time points $x'\in\mathcal{X}$ that satisfy $|\mathfrak{t}(x')-\mathfrak{t}(x)|\leq r^2$ and $d_{W_1}(\nu_{x;\mathfrak{t}(x)-r^2},\nu_{x';\mathfrak{t}(x)-r^2})\leq r$, where $d_{W_1}$ denotes the $1$-Wasserstein distance between probability measures in $\mathcal{X}_{\mathfrak{t}(x)-r^2}$. Likewise, $P_{-}(x,r)$ denotes the set of all $x'\in P(x,r)$ with $\mathfrak{t}(x')\leq\mathfrak{t}(x)$.\\

Theorem \ref{thm_can_nbd} (canonical neighborhoods) gives a precise description of the flow near any cylindrical singularity, and is inspired by Perelman's canonical neighborhood theorem for 3d Ricci flow \cite{Per2} and by the mean-convex neighborhood theorem for neck-singularities in mean curvature flow \cite{CHH,CHHW}.\\

Finally, let us say a few words regarding uniqueness of Ricci flow through singularities. In \cite{BK_uniqueness}, Bamler-Kleiner proved that 3d Ricci flow through singularities is unique. On the other hand, Angenent-Knopf \cite{AK_conical} showed that conical singularities can cause nonuniqueness in dimension 5 and higher, leaving open the critical dimension 4. Motivated by Theorem \ref{thm_can_nbd} (canonical neighborhoods) and the uniqueness result for mean curvature flow through singularities with mean-convex neighborhoods from Hershkovits-White \cite{HW_uniqueness}, it seems reasonable to expect that 4d Ricci flow through cylindrical singularities should be unique, see also \cite{Haslhofer_Ricci_note}. On the other hand, we believe that quotient-necks lead to non-uniqueness:

\begin{conjecture}[non-uniqueness]\label{conj_nonunique}
For every $k\geq 3$ there exist a Ricci flow on $S^1\times S^3/\mathbb{Z}_k$ that forms a quotient-neck singularity with tangent flow $\mathbb{R}\times S^3/\mathbb{Z}_k$, and continuous non-uniquely modelled either on the Bryant soliton modulo $\mathbb{Z}_k$ or on Appleton's cohomogeneity-one soliton on the line bundle $O(-k)$ from \cite{Appleton_soliton}.
\end{conjecture}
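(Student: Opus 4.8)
\emph{Step 1 (the presingular flow).} The idea is to realize the quotient-neck as an ordinary nondegenerate neckpinch whose cross-section $S^{3}/\mathbb{Z}_{k}$ admits two topologically inequivalent fillings, so I would first run an Angenent--Knopf--type neckpinch construction on $M_{k}:=S^{1}\times S^{3}/\mathbb{Z}_{k}$ (compare \cite{ACK}). Take a warped product over the circle, $g_{0}=dx^{2}+\psi_{0}(x)^{2}\,g_{S^{3}/\mathbb{Z}_{k}}$, where $g_{S^{3}/\mathbb{Z}_{k}}$ is the round (Einstein) quotient metric and $\psi_{0}$ has a single sufficiently deep nondegenerate minimum sitting inside a comparatively fat background. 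The Ricci tensor of such a warped product sees the cross-section only through its (unchanged) Einstein constant, so the quotient $\mathbb{Z}_{k}$ is invisible to the local analysis and the Angenent--Knopf barriers and maximum-principle estimates go through verbatim: for $\psi_{0}$ pinched deeply enough, the flow develops at some $T<\infty$ a Type-I singularity concentrated at a single $x_{0}\in S^{1}$, stays smooth with bounded curvature on $M_{k}\setminus\{x_{0}\}$ up to $T$, has a well-defined incomplete singular-time slice $g_{\mathrm{sing}}$ on $(S^{1}\setminus\{x_{0}\})\times S^{3}/\mathbb{Z}_{k}\cong(0,1)\times S^{3}/\mathbb{Z}_{k}$, and its parabolic rescalings at $(x_{0},T)$ converge to the round shrinking $\mathbb{R}\times S^{3}/\mathbb{Z}_{k}$. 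The only point requiring care beyond the closed case is excluding competing behaviour — the $S^{1}$ shrinking first, or a second neck forming — which is arranged by making the neck thin on a scale far below everything else.

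\emph{Step 2 (the orbifold continuation).} One way to continue past $T$ caps each of the two incomplete ends of $(0,1)\times S^{3}/\mathbb{Z}_{k}$ by $\mathbb{R}^{4}/\mathbb{Z}_{k}$ and runs an orbifold Ricci flow on the resulting closed $4$-orbifold, namely $S^{4}/\mathbb{Z}_{k}$ with two $\mathbb{Z}_{k}$-cone points. Since the Angenent--Caputo--Knopf forward evolution of a nondegenerate neckpinch \cite{ACK} is $\mathrm{O}_{4}$-equivariant, it descends under the Hopf $\mathbb{Z}_{k}\subset\mathrm{O}_{4}$ to an orbifold Ricci flow that attains $g_{\mathrm{sing}}$ as $t\downarrow T$ and whose curvature-rescalings near each cap converge to the $\mathbb{Z}_{k}$-quotient of the $4$d Bryant soliton. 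As orbifold singularities are permitted in the definition of metric Ricci flow, this is a metric Ricci flow extending the Step 1 flow.

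\emph{Step 3 (the smooth continuation --- the main obstacle).} A second continuation instead caps each end by the smooth disk bundle $D(O(-k))$ --- possible precisely because $S^{3}/\mathbb{Z}_{k}=\partial D(O(-k))$ --- yielding a closed smooth $4$-manifold $\hat{M}_{k}$, the double of $D(O(-k))$, which is an $S^{2}$-bundle over $S^{2}$ (so $S^{2}\times S^{2}$ or $\mathbb{C}P^{2}\#\overline{\mathbb{C}P^{2}}$ according to the parity of $k$) carrying a $k$-dependent $\mathrm{U}(2)$-cohomogeneity-one structure with two $\mathbb{C}P^{1}$ bolts as singular orbits. On $\hat{M}_{k}$ I would construct a Ricci flow on a short time interval after $T$ that again attains $g_{\mathrm{sing}}$ as $t\downarrow T$ but whose curvature-rescalings near each cap converge to Appleton's steady soliton on $O(-k)$ \cite{Appleton_soliton}. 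The natural route is a $\mathrm{U}(2)$-cohomogeneity-one analogue of the Angenent--Caputo--Knopf forward construction: write $g=dr^{2}+a(r,t)^{2}\theta^{2}+b(r,t)^{2}g_{\mathrm{FS}}$ with $\theta$ the connection form of $S^{3}/\mathbb{Z}_{k}\to\mathbb{C}P^{1}$, approximate $g_{\mathrm{sing}}$ by smooth metrics on $\hat{M}_{k}$ obtained by grafting in Appleton-soliton caps of shrinking size, flow these, and extract a limit, using sub/supersolutions built from Appleton's soliton near the bolt as barriers that keep the $\mathbb{C}P^{1}$ bolt from collapsing in the limit and force the flow to open up on the Appleton model. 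The two genuine difficulties are: (i) a rigorous construction of Appleton's soliton on $O(-k)$ for $k\ge 3$ with the asymptotics needed both to match the neck and to serve as a barrier --- which at present rests on the numerics of \cite{Appleton_soliton}; and (ii) carrying out the Angenent--Caputo--Knopf limiting and barrier argument with a positive-dimensional singular orbit (a $\mathbb{C}P^{1}$ bolt) in place of a point, where both the smoothness conditions at the bolt and the no-collapse of the bolt in the limit are considerably more delicate than in the rotationally symmetric case. Essentially all the work of the argument lies here.

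\emph{Step 4 (conclusion).} Steps 1--3 produce two metric Ricci flows agreeing with the Step 1 flow for $t\le T$ but disagreeing for $t>T$: one has orbifold time-slices $S^{4}/\mathbb{Z}_{k}$ and is modelled near its two caps on the Bryant soliton modulo $\mathbb{Z}_{k}$, the other has smooth time-slices $\hat{M}_{k}$ and is modelled near its two caps on Appleton's $O(-k)$ soliton; since an orbifold Bryant cap and a smooth $\mathbb{C}P^{1}$-bolt Appleton cap are not isometric even up to scaling, these are genuinely distinct flows (and choosing the two ends independently gives several more). Hence $4$d Ricci flow through quotient-neck singularities is non-unique, the mechanism being --- as in the conical non-uniqueness of Angenent--Knopf in dimensions $\ge 5$ \cite{AK_conical} --- that the link $S^{3}/\mathbb{Z}_{k}$ of the emerging cap admits inequivalent fillings. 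The range $k\ge 3$ is precisely where $O(-k)$ carries a genuine (non-Ricci-flat) Appleton steady soliton furnishing the smooth alternative; for $k=2$ one would graft in the Ricci-flat Eguchi--Hanson metric on $O(-2)=T^{*}S^{2}$ instead, and for $k=1$ there is no quotient.
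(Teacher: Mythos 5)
This is stated as a \emph{conjecture}, not a theorem, and the paper offers no proof: the entire discussion following the statement is the single remark that, in light of Angenent--Caputo--Knopf \cite{ACK}, ``it would suffice to show that quotient-necks can be resolved by gluing in Appleton solitons.'' So there is no ``paper's own proof'' to compare against; what you have written is a research program, not a proof, and you are right to flag it as such.

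That said, your program is exactly the elaboration of the paper's one-sentence suggestion, and I think you have identified the decomposition correctly: (1) form a nondegenerate warped-product quotient-neck pinch on $S^1\times S^3/\mathbb{Z}_k$, using that the round $S^3/\mathbb{Z}_k$ is Einstein with the same Einstein constant as $S^3$ so the ACK barriers are local to the neck; (2) the orbifold continuation by Bryant modulo $\mathbb{Z}_k$, which descends from the rotationally symmetric ACK forward evolution; (3) the genuinely new smooth continuation grafting in Appleton's $O(-k)$ soliton, which is where essentially all of the work lies; (4) the two continuations have different space-time topology, so non-uniqueness follows. You also correctly isolate the two real obstacles in Step 3: the rigorous (currently numerical) existence of Appleton's soliton with the needed cylindrical asymptotics, and running an ACK-type barrier/limiting argument with a $\mathbb{C}P^1$ bolt rather than a point as the singular orbit. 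This is the honest state of the problem.

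One side remark that I would drop or qualify: your suggestion that for $k=2$ one could graft in the Ricci-flat Eguchi--Hanson metric on $O(-2)=T^*S^2$ does not fit the mechanism. Eguchi--Hanson is ALE, asymptotic at infinity to the flat cone $\mathbb{R}^4/\mathbb{Z}_2$, whereas to resolve a quotient \emph{neck}-pinch one needs a cap whose end is asymptotic to the cylinder/paraboloid geometry of $\mathbb{R}\times S^3/\mathbb{Z}_k$ --- exactly the feature the Bryant soliton (and conjecturally Appleton's solitons) have and a cone metric does not. This is presumably also why the conjecture is stated for $k\ge 3$: that is the range in which Appleton's construction produces a non-Ricci-flat steady soliton on $O(-k)$ with the right end behaviour.
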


The conjecture proposes a mechanism for non-uniqueness in dimension 4.  To prove Conjecture \ref{conj_nonunique} (non-uniqueness), in light of work of Angenent-Caputo-Knopf \cite{ACK}, it would suffice to show that quotient-necks can be resolved by gluing in Appleton solitons.\\

This article is organized as follows. In Section \ref{sec2}, we prove Theorem \ref{thm_ovals} (bubble-sheet ovals). In Section \ref{sec3}, we prove Theorem \ref{thm_can_nbd} (canonical neighborhoods).

\bigskip
\noindent\textbf{Acknowledgments.} We thank the referee for useful comments. The author has been partially supported by an NSERC Discovery Grant.\\

\section{Construction of new $\kappa$-solutions}\label{sec2}

In this section, we prove Theorem \ref{thm_ovals} (bubble-sheet ovals).

\begin{proof}[Proof of Theorem \ref{thm_ovals}] 
Given any $a\in (0,1)$ and $L < \infty$, we consider the ellipsoid
\begin{equation}
E^{L,a}:=\left\{ x\in \mathbb{R}^{5} \, : \,  \frac{a^2}{L^2} x_1^2 + \frac{(1-a)^2}{L^2} x_2^2 + x_3^2+x_4^2+x_5^2 = 1 \right\}\, .
\end{equation}
Denote by $g^{L,a}$ the metric on $S^4$ induced by the canonical map $S^4\to E^{L,a}$.\\

By a result of Hamilton \cite{Hamilton_pco}, the Ricci flow evolution $g^{L,a}_t$ has positive curvature operator and becomes extinct in a round point at some $t_{L,a}<\infty$.  
Since the Ricci flow depends continuously on the initial condition, we have
\begin{equation}\label{eq_t_lim}
\lim_{L\to\infty} t_{L,a}=\frac{1}{2}.
\end{equation}
Also, from the explicit formula for ellipsoids it is not hard to see that metrics $g^{L,a}$ have curvature uniformly bounded above, and injectivity radius uniformly bounded below. Hence, by Perelman \cite{Per1}, there is some uniform $\kappa>0$ such that the metrics $g^{L,a}_t$ are $\kappa$-noncollapsed at scales $\leq 1$. Note also that the $\mathbb{Z}_2^2\times \mathrm{O}_3$-symmetry is preserved under Ricci flow, parabolic rescaling, and passing to limits.\\

Consider Perelman's monotone quantity \cite{Per1},
\begin{equation}
V^{L,a}(t)=\int_{S^4} \frac{1}{(4\pi(t_{L,a}-t))^{2}} e^{-\ell(q,t)}\, dV_{g^{L,a}_t}(q),
\end{equation}
where $\ell$ denotes the reduced length based at the singular time, c.f. Enders-M\"uller-Topping \cite{EMT}.
Then, for any large enough $L$ we can find a unique $t_{L,a}'$ such that
\begin{equation}
V^{L,a}(t_{L,a}')=\frac{v_{2}+v_{3}}{2},
\end{equation}
where $v_j$ denotes the reduced volume of the $j$-sphere (concretely, one has $v_2=2/e$ and $v_3=2(\pi/e^3)^{1/2}$, see e.g. \cite{CHI}).
Using again continuous dependence of the Ricci flow on the initial data we see that
\begin{equation}\label{eq_tp_lim}
\lim_{L\to\infty} t_{L,a}'=\frac{1}{2}.
\end{equation}
Now, set
\begin{equation}
\lambda_{L,a}:=(t_{L,a}-t_{L,a}')^{-1/2},
\end{equation}
and consider the parabolically rescaled flows
\begin{equation}
\tilde{g}^{L,a}_t := \lambda_{L,a}^2 g^{L,a}_{\lambda^{-2}_{L,a} t+t_{L,a}}.
\end{equation}
By construction $\tilde{g}^{L,a}_t$ becomes extinct at time $0$ and satisfies
\begin{equation}
\int_{S^4} \frac{1}{(4\pi)^{2}}e^{-\ell(q,-1)}\, dV_{\tilde{g}^{L,a}_{-1}}(q)=\frac{v_{2}+v_{3}}{2}.
\end{equation}
The flow $\tilde{g}^{L,a}_t$ is defined for $t\in (T_{L,a},0)$, where $T_{L,a}=-\lambda_{L,a}^2 t_{L,a}$, and thanks to \eqref{eq_t_lim} and \eqref{eq_tp_lim} we have
\begin{equation}
\lim_{L\to\infty} T_{L,a} = -\infty.
\end{equation}

We now consider suitable widths in $x_1$ and $x_2$ direction. Specifically, we set
\begin{equation}
    w_{1}^L(a):=\mathrm{diam}\big(\tilde{g}^{L,a}_{-1}|_{S^4\cap \{ x_2 = 0\}}\big),\qquad  w_{2}^L(a):=\mathrm{diam}\big(\tilde{g}^{L,a}_{-1}|_{S^4\cap \{ x_1 = 0\}}\big).
\end{equation}
Using this we can now define the reciprocal width ratio map
\begin{equation}
F^{L} : (0,1) \to (0,1), \quad a\mapsto \frac{w_{1}^L(a)^{-1}}{w_{1}^L(a)^{-1}+w_{2}^L(a)^{-1}}\, .
\end{equation}

\begin{claim}[reciprocal width ratio map]\label{claim_width}
$F^{L}$ is continuous and surjective.
\end{claim}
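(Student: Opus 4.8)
The plan is to deduce continuity from the smooth dependence of the entire construction on $a$, and surjectivity from the intermediate value theorem together with an analysis of the two ends $a\to 0^+$ and $a\to 1^-$. For continuity, the extinction time $t_{L,a}$ depends smoothly on $a$ since the Ricci flow does; Perelman's reduced volume $V^{L,a}(t)$ is jointly continuous in $(a,t)$ and monotone in $t$, so its unique level $t_{L,a}'$, and hence $\lambda_{L,a}$ and $\tilde g^{L,a}_{-1}=\lambda_{L,a}^2\, g^{L,a}_{t_{L,a}'}$, depend continuously on $a$; since the diameter of a fixed embedded $S^3\subset S^4$ is a positive continuous function of the ambient metric, $w^L_1$ and $w^L_2$ are continuous and positive, and therefore so is $F^L$.

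For surjectivity, note first that the coordinate swap $x_1\leftrightarrow x_2$ is an isometry $(S^4,g^{L,a})\cong(S^4,g^{L,1-a})$ interchanging the slices $\{x_2=0\}$ and $\{x_1=0\}$; it commutes with the whole construction, whence $w^L_1(a)=w^L_2(1-a)$ and $F^L(1-a)=1-F^L(a)$. So it suffices to show $F^L(a)\to 0$ as $a\to 0^+$: then also $F^L(a)\to 1$ as $a\to 1^-$, and since the image of $F^L$ is a subinterval of $(0,1)$ containing points arbitrarily close to $0$ and to $1$, it equals $(0,1)$. Now the common scale $\lambda_{L,a}$ cancels in $1-F^L(a)=\big(1+w^L_2(a)/w^L_1(a)\big)^{-1}$, so it is enough to prove that
\[
\frac{\diam\big(g^{L,a}_{t_{L,a}'}\big|_{S^4\cap\{x_2=0\}}\big)}{\diam\big(g^{L,a}_{t_{L,a}'}\big|_{S^4\cap\{x_1=0\}}\big)}\longrightarrow\infty\qquad(a\to 0^+).
\]
Since positive curvature operator, hence $\mathrm{Ric}>0$, is preserved along the flow, the metric is pointwise nonincreasing in time, so distances are nonincreasing and the denominator is at most $\diam\big(g^{L,a}_0\big|_{\{x_1=0\}}\big)$, the diameter of the $3$-ellipsoid with semi-axes $\tfrac{L}{1-a},1,1,1$, which stays bounded as $a\to 0^+$. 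For the numerator, as $a\to 0^+$ the pointed manifolds $(S^4,g^{L,a},p_0)$, based at a fixed point $p_0\in\{x_1=x_2=0\}$, converge in $C^\infty_{\mathrm{loc}}$ to the complete noncompact product $\R\times E^L_3$, where $E^L_3\subset\R^4$ is the $3$-ellipsoid of semi-axes $L,1,1,1$; by continuous dependence of the Ricci flow, the flows converge on $[0,T]$ for every $T$ below the extinction time $t^{(3)}_L$ of the Ricci flow $(E^L_3)_t$, the limit being the product flow $\R\times(E^L_3)_t$, whose slice $\{x_2=0\}=\R\times(\cdot)$ has infinite diameter for all $t<t^{(3)}_L$. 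Comparing with this limit, $\diam\big(g^{L,a}_t\big|_{\{x_2=0\}}\big)\to\infty$ uniformly on $[0,T]$; so, provided $t_{L,a}'\le T$ for some fixed $T<t^{(3)}_L$ and all small $a$, the numerator tends to $\infty$, and we are done.

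The step I expect to be the main obstacle is precisely the claim that $t_{L,a}'$ stays bounded away from the extinction time. I would establish it from Perelman's monotonicity: as $a\to 0^+$ one has $t_{L,a}\to t^{(3)}_L$, while for $T$ near $t^{(3)}_L$ the reduced volume satisfies $V^{L,a}(T)\to \tilde V_{(E^L_3)_\bullet}\big(t^{(3)}_L-T\big)$ (the static $\R$-factor contributing $1$), and the right-hand side can be made as close as one likes to $v_3=\tilde V(S^3)$ by taking $T$ close enough to $t^{(3)}_L$; since $v_3>(v_2+v_3)/2$, this forces $V^{L,a}(T)>(v_2+v_3)/2$ for all small $a$, hence $t_{L,a}'<T$ by monotonicity. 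The delicate part is to make the convergences $t_{L,a}\to t^{(3)}_L$ and $V^{L,a}(T)\to \tilde V_{(E^L_3)_\bullet}(t^{(3)}_L-T)$ rigorous, the singular point delocalizing along a line in the limit, which I would carry out using the Type~I and Enders--M\"uller--Topping theory of reduced volume.
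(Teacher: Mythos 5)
Your proof is essentially correct, and it is actually considerably more detailed than what appears in the paper. For continuity you argue via continuous dependence of the flow, the extinction time, the reduced volume, and hence $t_{L,a}'$ and $\lambda_{L,a}$, on $a$; the paper's proof is the same in spirit, phrased via Hamilton's compactness theorem together with uniqueness of Ricci flow to upgrade convergence of initial data to convergence of the rescaled flows. The one thing to be careful about is that one gets continuity, not smoothness, of $a\mapsto t_{L,a}$ from this argument; but continuity is all you use, so this is cosmetic.

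For surjectivity, the paper simply asserts $\lim_{a\to 0}F^L(a)=0$ and $\lim_{a\to 1}F^L(a)=1$ with no justification, so you are filling in a genuine gap rather than diverging from the paper. Your reduction via the isometry $(S^4,g^{L,a})\cong(S^4,g^{L,1-a})$ swapping $x_1\leftrightarrow x_2$, which gives $F^L(1-a)=1-F^L(a)$, is a clean observation and is correct (the entire construction, including the choice of $t_{L,a}'$ via the reduced volume, is equivariant). The subsequent analysis of $a\to 0^+$ is also conceptually right: the denominator is controlled by monotone decrease of distances under positive Ricci curvature, the numerator blows up because the pointed $C^\infty_{\mathrm{loc}}$ limit is $\R\times E^L_3$ whose $\{x_2=0\}$ slice is $\R\times S^2$, and the cancellation of $\lambda_{L,a}$ in the ratio $w_1/w_2$ makes the rescaling irrelevant. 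Your computation that the static $\R$-factor contributes factor $1$ to the reduced volume, so $\tilde V_{\R\times (E^L_3)_\bullet}=\tilde V_{(E^L_3)_\bullet}$ and the latter tends to $v_3>(v_2+v_3)/2$ as $\tau\to 0^+$, is correct and is the right way to bound $t_{L,a}'$ away from the extinction time. As you yourself flag, the technical burden is in making rigorous the convergence of the reduced-length-based-at-the-singular-time as $a\to 0^+$, since the reduced length depends on the flow all the way up to $t_{L,a}$, where the $4$-ellipsoid flow (a small round $S^4$) looks nothing like $\R\times(E^L_3)_t$; this does require Type~I blowdown/reduced-volume machinery in the style of Enders--M\"uller--Topping, as you indicate, and is a real issue to address carefully, though the conclusion is not in doubt.
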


\begin{proof}
Let $a^{i}\in (0,1)$ be a sequence that converges to some $a\in (0,1)$. Then, clearly $g^{L,a_i}\to g^{L,a}$, and arguing similarly as above we also see that $t_{L,a^{i}}\to t_{L,a}$ and $\lambda_{L,a_i}\to \lambda_{L,a}$. Now, by Hamilton's compactness theorem \cite{Hamilton_compactness} there is a subsequence such that $(S^4,\tilde{g}^{L,a_i}_t)$ converges to some $\kappa$-noncollasped limit. By the above and by uniqueness of Ricci flow this limit must be equal to $(S^4,\tilde{g}^{L,a}_t)$. This shows that $F^{L}(a_i)\to F^{L}(a)$. Furthermore, observe that
\begin{equation}
\lim_{a\to 0} F^L(a)=0,\qquad \lim_{a\to 1} F^L(a)=1.
\end{equation}
Together with the intermediate value theorem this implies the assertion.
\end{proof}

Continuing the proof of the theorem, for any $\mu\in (0,1)$ we will now construct a bubble-sheet oval with prescribed reciprocal width ratio $\mu$. To this end, given
 $\mu_i\to \mu$ and $L_i\to \infty$, by Claim \ref{claim_width} (reciprocal width ratio) we can find $a_i \in (0,1)$, such that $F^{L_i}(a_i)=\mu_i$. 
Now, by \cite[Section 7]{Per1} our sequence of rescaled flows satisfies
\begin{equation}
\sup_{t<0} |t|\min_{x\in S^4}R_{\tilde{g}^{L_i,a_i}_t}(x) \leq C.
\end{equation}
Hence, by Hamilton's Harnack inequality and compactness theorem \cite{Hamilton_Harnack,Hamilton_compactness}, after passing to a subsequence $(S^4,\tilde{g}^{L_i,a_i}_t)$ converges to an ancient Ricci flow $(M,g_t)_{t<0}$ with nonnegative curvature operator that is $\kappa$-noncollapsed at all scales. Note that the scalar curvature is positive by the strict maximum principle. Moreover, by construction the limit flow becomes extinct at time $0$, is $\mathbb{Z}_2^2\times \mathrm{O}_3$-symmetric, and satisfies
\begin{equation}\label{withcond}
\int_{M} \frac{1}{(4\pi)^{2}}e^{-\ell(q,-1)}\, dV_{g_{-1}}(q)=\frac{v_{2}+v_{3}}{2}.
\end{equation}
We claim that the limit is compact. To see this, suppose towards a contradiction that $w_1^{L_i}(a_i)\to \infty$ and $w_2^{L_i}(a_i)\to \infty$. Then $(M,g_t)_{t<0}$ would split off two lines, and hence would be a round shrinking bubble-sheet, contradicting \eqref{withcond}. Remembering also $F^{L_i}(a_i)=\mu_i$, this shows that $w_1^{L_i}(a_i)+w_2^{L_i}(a_i)$ is bounded. Thus, $M$ is compact, and hence diffeomorphic to $S^4$. Also, by construction the limit satisfies
\begin{equation}
\frac{w_{1}^L(a)^{-1}}{w_{1}^L(a)^{-1}+w_{2}^L(a)^{-1}}=\mu.
\end{equation}
Finally, by \cite[Section 11]{Per1} and \cite[Corollary 4]{MW3}, any tangent flow at $-\infty$ must be a generalized cylinder, and together with \eqref{withcond} and the symmetries it follows that it must be a bubble-sheet. This concludes the proof of the theorem.
\end{proof}

\bigskip

\section{Canonical neighborhoods}\label{sec3}

In this section, we prove Theorem \ref{thm_can_nbd} (canonical neighborhoods).

\begin{proof}[Proof of Theorem \ref{thm_can_nbd}] 
Let $\mathcal{X}$ be a 4d metric Ricci flow that has a cylindrical singularity at $x\in\mathcal{X}$. By Definition \ref{def_cyl_sing} (cylindrical singularity) this means that there is some sequence $\lambda_i\to \infty$, such that the metric flow pair $(\mathcal{X}^{x,\lambda_{i}},(\nu^{x;\lambda_{i}}_s)_{s\leq 0})$, which is obtained from $\mathcal{X}$ by parabolically rescaling by $\lambda_i$ around the center $x$, converges to a round shrinking $\mathbb{R}\times S^3$ or a round shrinking $\mathbb{R}^2\times S^2$, equipped with the standard adjoint heat kernel measures. In particular, by the semicontinuity of the Nash entropy from \cite[Proposition 4.37]{Bam3}, which is applicable thanks to our assumption that the scalar curvature is bounded below, the flow $\mathcal{X}$ is $\kappa$-noncollapsed in the two-sided parabolic ball $P(x,\delta_0)$ for some constants $\kappa>0$ and $\delta_0>0$.
 A priori the above limit is just  in the sense of $\mathbb{F}$-convergence on compact time-intervals within some correspondence \cite[Section 6]{Bam2}. However, since the limit is smooth, by the local regularity theorem \cite[Theorem 2.29]{Bam3} (see also Hein-Naber \cite{HeinNaber}) the convergence is actually locally smooth. Moreover, since cylinders are isolated in the space of shrinkers by a result of Colding-Minicozzi \cite{CM_Ricci} (see also Li-Wang \cite{LW}), the flow pair $(\mathcal{X}^{x,\lambda_i},(\nu^{x;\lambda_i}_s)_{s\leq 0})$ actually converges to a round shrinking cylinder along every sequence $\lambda_i\to \infty$. \\

Now, suppose towards a contradiction there are regular points $y_i\to x\in \mathcal{X}$ whose scalar curvature satisfies $R(y_i)\leq C$.
Consider the regularity scale $r_{\textrm{reg}}(y_i)$, i.e. the largest radius $r\leq 1$ such that $\mathcal{X}$ is smooth with curvature bounded by $r^{-2}$ in the parabolic ball $P(y_i,r)$. Since $y_i$ is a sequence of regular points that converges to a singular point, the numbers $\lambda_i:=r_{\textrm{reg}}(y_i)^{-1}$ are finite and converge to infinity. Consider the sequence of rescaled flows $(\mathcal{X}^{y_i,\lambda_i},(\nu^{y_i;\lambda_i}_s)_{s\leq 0})$. By Bamler's compactness theorem \cite{Bam2}, after passing to a subsequence, we can assume that it converges to some limit $(\mathcal{X}^\infty,(\nu^\infty_{x^\infty;s})_{s\leq 0})$.\\

Since $y_i\to x$, by the first paragraph, taking also into account Bamler's change of base-point theorem from \cite[Section 6]{Bam2}, the limit flow $\mathcal{X}^\infty$ is either an ancient asymptotically cylindrical flow or a nontrivial blowup limit thereof.
Hence, assuming Conjecture \ref{conj_cyl} (ancient 4d Ricci flows), and observing that taking nontrivial blowups thereof only adds $S^4$ to the list, $\mathcal{X}^\infty$ must be either $S^4$, $\mathbb{R}\times S^3$, $\mathbb{R}^2\times S^2$ or one of the listed steadies or ovals. In particular, all these solutions have strictly positive scalar curvature. On the other hand, by construction $0\in \mathcal{X}^\infty$ has regularity scale at least 1, and thus in particular is a regular point. Hence, $R(0)\leq0$, which gives the desired contradiction.\\

So far we have shown that for every $\eps>0$ there exists a $\delta_1=\delta_1(\eps,x)>0$ such that at any regular $y\in P(x,\delta_1)$ the scalar curvature satisfies $R(y)>\eps^{-1}$. To proceed, recall that by definition the regularity scale is bounded above by the scalar curvature scale, namely $r_{\textrm{reg}}\leq R^{-1/2}$. On the other hand, if along some sequence of regular points $y_i\to x$ we had $r_{\textrm{reg}}(y_i) R(y_i)^{1/2}\to 0$, then blowing by $\lambda_i:=r_{\textrm{reg}}(y_i)^{-1}$ we would obtain a similar contradiction as above. This shows that these two scales are comparable near $x$, namely there exist some $\delta_2=\delta_2(x)>0$ and $C=C(x)<\infty$, such at any regular point $y\in P(x,\delta_2)$ we have $r_{\textrm{reg}}(y)\leq R^{-1/2}(y)\leq Cr_{\textrm{reg}}(y)$.\\

Finally, suppose towards a contradiction there are regular points $y_i\to x$, such that the rescaled flows $\mathcal{X}^{y_i,\lambda_i}$, where $\lambda_i:=R(y_i)^{1/2}$, are not $\eps$-close in $C^{\lfloor 1/\eps\rfloor}$ in the backwards parabolic ball $P_{-}(0,1/\eps)$ to one of the solutions from Conjecture \ref{conj_cyl} (ancient 4d Ricci flows) or to $S^4$. Then, arguing as above along a subsequence we could pass to a limit $\mathcal{X}^\infty$ that must be either $S^4$, $\mathbb{R}\times S^3$, $\mathbb{R}^2\times S^2$ or one of the listed steadies or ovals, and such that $0\in \mathcal{X}^\infty$ has regularity scale comparable to $1$. However, by the local regularity theorem \cite[Theorem 2.29]{Bam3} (see also \cite{HeinNaber}) this implies $\eps$-closeness for $i$ large enough. This gives the desired contradiction, and thus concludes the proof of the theorem.
\end{proof}

\bigskip

\bibliography{4d_ricci.bib}
\bibliographystyle{alpha}

\vspace{10mm}

{\sc Department of Mathematics, University of Toronto,  40 St George Street, Toronto, ON M5S 2E4, Canada}\\

\end{document}